\documentclass[12pt,reqno]{article}
\usepackage{amssymb,amscd,amsmath,amsthm,color}
\newtheorem{theorem}{Theorem}[section]

\newtheorem{cor}[theorem]{Corollary}

\usepackage{enumerate,amssymb}
\theoremstyle{definition}
\newtheorem{definition}[theorem]{Definition}
\newtheorem{example}[theorem]{Example}
\newtheorem{question}[theorem]{Question}
\theoremstyle{remark}
\newtheorem{remark}[theorem]{Remark}

\numberwithin{equation}{section}

\def\bA{\mathbb{A}}

\def\bM{\mathbb{M}}

\def\bB{\mathbb{B}}

\newcommand{\sym}{\mathrm{sym}}
\newcommand{\qsym}{\mathrm{qsym}}

\begin{document}
\baselineskip=15pt

\title{ Triangle inequalities for the operator symmetric modulus}

\author{ Jean-Christophe Bourin 
and  Eun-Young Lee
}

\date{ }

\maketitle

\vskip 10pt\noindent
{\small
{\bf Abstract.}  Let $|Z|_{\mathrm{sym}}=(|Z^*|+|Z|)/2$ denote the symetric modulus of $Z\in\bM_n$. The  triangle inequality for this modulus and the operator norm $\|\cdot\|_{\infty}$ fails : Teng Zhang recently pointed out a simple example in $\bM_2$ with
$\| |A+B|_{\sym}\|_{\infty}> \| |A|_{\sym}\|_{\infty}+\| |B|_{\sym}\|_{\infty}$. However,  for every symmetric (unitarily invariant) norm and any family of $m$ matrices in $\bM_n$, we have
$$ 
\left\| \, \left|\sum_{k=1}^m X_k\right|_{\sym} \right\| \le \sqrt{2} \left\| \sum_{k=1}^m \left|X_k\right|_{\sym} \right\| .
$$
This is derived from an inequality involving unitary orbits which also entails some eigenvalue estimates and logmajorisation relations. In some quite interesting special cases, $\sqrt{2}$ can be reduced to 1; in particular when the sum $\sum_{k=1}^m X_k=:S$ is an involution (which means $S^2=I$, the identity).

\vskip 5pt\noindent
{\it Keywords.} Matrix inequalities,    unitary orbits,  symmetric modulus,  majorisation.
\vskip 5pt\noindent
{\it 2020 mathematics subject classification.} 47A30, 15A60.
}

\section{Introduction}

Let $\bM_n$ denote the space of complex $n\times n$ matrices. $|A|=(A^*A)^{1/2}$ denotes the absolute value, or right modulus,   of $A\in\bM_n$. 
We may  read  $\bM_n$ as the space of real matrices; in this case, a unitary matrix then means an orthogonal one.

The symmetric modulus and the quadratic symmetric modulus of $Z\in\bM_n$ are respectively defined as
$$
|Z|_{\sym}:= \frac{|Z|+|Z^*|}{2}, \qquad
|Z|_{\qsym}:=\sqrt{\frac{|Z|^2+|Z^*|^2}{2}}.
$$

Very revently Teng Zhang obtained a superb  Thompson's type triangle  inequality for the quadratic modulus (\cite[Theorem 1.12]{TZsym}) :

\vskip 5pt\noindent
\begin{theorem}\label{thZ} Let $A,B\in\bM_n$. Then, for some unitaries $U,V\in\bM_n$,
$$
|A+B|_{\qsym} \le U|A|_{\qsym}U^* +V|B|_{\qsym}V^*.
$$
\end{theorem}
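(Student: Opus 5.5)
The plan is to reduce the statement to a Thompson-type inequality for the ordinary modulus of rectangular matrices, by exploiting a linear representation of the quadratic symmetric modulus.

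\emph{Step 1: linearisation.} Write $Z=H+iK$ with $H,K$ Hermitian. A direct expansion gives $Z^*Z+ZZ^*=2(H^2+K^2)$, so $|Z|_{\qsym}=(H^2+K^2)^{1/2}$. Introduce the $2n\times n$ matrix
$$
C_Z:=\begin{pmatrix} H\\ K\end{pmatrix}, \qquad\text{so that}\qquad C_Z^*C_Z=H^2+K^2,\quad |C_Z|=|Z|_{\qsym}.
$$
The map $Z\mapsto C_Z$ is real linear, hence $C_{A+B}=C_A+C_B$. The theorem therefore follows once we prove the following: for any $2n\times n$ matrices $X,Y$ there exist unitaries $U,V\in\bM_n$ with $|X+Y|\le U|X|U^*+V|Y|V^*$.

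\emph{Step 2: polar decomposition.} Write $X+Y=W|X+Y|$, where $W$ is a $2n\times n$ matrix with $W^*W=I$; such an isometry exists since $2n\ge n$. Then $|X+Y|=W^*(X+Y)$. This matrix is Hermitian, so taking real parts gives
$$|X+Y|=\mathrm{Re}(W^*X)+\mathrm{Re}(W^*Y),$$
where $\mathrm{Re}\,M=(M+M^*)/2$ and $W^*X,W^*Y\in\bM_n$.

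\emph{Step 3: the key estimate, $\mathrm{Re}\,M\le U|X|U^*$ for $M=W^*X$.} This is the only real step. Since $W^*$ is a contraction, $s_j(M)\le s_j(X)=\lambda_j(|X|)$ for each $j$. By Fan's inequality $\lambda_j(\mathrm{Re}\,M)\le s_j(M)$, so
$$\lambda_j(\mathrm{Re}\,M)\le\lambda_j(|X|)\quad\text{for all } j,$$
with eigenvalues in decreasing order. This eigenvalue domination upgrades to an operator inequality after a unitary conjugation. Diagonalise $\mathrm{Re}\,M=\sum_j\lambda_j(\mathrm{Re}\,M)e_je_j^*$ and $|X|=\sum_j\lambda_j(|X|)f_jf_j^*$, both in decreasing order. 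Let $U$ be the unitary sending $f_j$ to $e_j$. Then $U|X|U^*-\mathrm{Re}\,M$ is diagonal in the basis $(e_j)$ with nonnegative entries, hence positive semidefinite.

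\emph{Step 4: conclusion.} The same argument applied to $W^*Y$ gives a unitary $V$ with $\mathrm{Re}(W^*Y)\le V|Y|V^*$. Adding the two inequalities and using Step 1 with $X=C_A$, $Y=C_B$ yields
$$|A+B|_{\qsym}\le U|A|_{\qsym}U^*+V|B|_{\qsym}V^*.$$
In the real case all the constructions remain real, so $U,V$ can be taken orthogonal.

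I expect the main point needing care to be Step 3: the passage from the singular value bounds to a genuine operator inequality via eigenvector matching. Once Step 1 is noticed, everything else is routine.
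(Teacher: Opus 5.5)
Your proof is correct. Its first step is the same idea as the paper's: realise $|Z|_{\qsym}$ as the modulus of a $2n\times n$ matrix that depends real-linearly on $Z$. The paper stacks $Z$ over $Z^*$ and divides by $\sqrt2$ at the end. Your column $(\mathrm{Re}\,Z;\mathrm{Im}\,Z)$ is the same object up to the left unitary factor $\frac{1}{\sqrt2}\begin{bmatrix} I& iI\\ I& -iI\end{bmatrix}$ and the scalar $\sqrt2$.

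The two arguments differ in how Thompson's inequality is applied to rectangular matrices.
\begin{itemize}
\item The paper pads with a zero block column to get matrices in $\bM_{2n}$ and applies Thompson's inequality there as a black box. It then compresses to the upper-left $n\times n$ corner, which turns the unitaries of $\bM_{2n}$ into contractions $K,L\in\bM_n$. Finally it uses $K|A|K^*\le U|A|U^*$ to return to unitaries.
\item You re-derive Thompson's inequality directly for $2n\times n$ matrices. You use the isometric polar factor $W$, the identity $|X+Y|=\mathrm{Re}(W^*X)+\mathrm{Re}(W^*Y)$, the Fan--Hoffman inequality and eigenvector alignment. In effect you inline the standard proof of Thompson's theorem in the rectangular setting.
\end{itemize}
Your route is more self-contained and avoids both the dilation/compression step and the contraction-to-unitary step. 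The paper's route needs only the statement of Thompson's theorem, and its compression device is what ties the argument to the subadditivity inequality \eqref{eqAB} for concave functions. Both arguments also give the more general bound for $\sqrt{|A_1+B_1|^2+|A_2+B_2|^2}$, since your rectangular step holds for arbitrary $2n\times n$ matrices $X,Y$.

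One small correction to your final remark about the real case. For real $Z$, $\mathrm{Im}\,Z=(Z-Z^*)/2i$ is purely imaginary, so $C_Z$ is not real and the construction does not stay real as written. Replace $K$ by $iK=(Z-Z^*)/2$, which works because $(iK)^*(iK)=K^2$, or use the paper's column $(Z;Z^*)$. With either fix everything stays real, and $U,V$ can indeed be taken orthogonal.
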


\vskip 5pt
Let ${\mathrm{Re\,}}Z=(Z+Z^*)/2$ and ${\mathrm{Im\,}}Z=(Z-Z^*)/2i$ denote the real and imaginary parts of $Z\in\bM_n$.
 Theorem \ref{thZ}  extends, and was motivated, by the following two matrix versions (\cite{BLsym}) of the  inequality for a complex number $z=a+ib$, $|z|\le |a|+|b|$.

\vskip 5pt
\begin{cor}\label{symq} Let $Z\in\bM_n$. Then, for some unitaries $U,V\in\bM_n$,
$$
|Z|_{\qsym} \le U |{\mathrm{Re\,}}Z| U^* + V |{\mathrm{Im\,}}Z|V^*.
$$
\end{cor}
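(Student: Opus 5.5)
Corollary \ref{symq} follows from Theorem \ref{thZ} applied to the decomposition $Z=A+B$ with $A={\mathrm{Re\,}}Z$ and $B=i\,{\mathrm{Im\,}}Z$. Theorem \ref{thZ} then provides unitaries $U,V\in\bM_n$ such that
$$
|Z|_{\qsym}=|A+B|_{\qsym}\le U|A|_{\qsym}U^*+V|B|_{\qsym}V^*.
$$
It remains to compute the two quadratic symmetric moduli on the right.

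First, $A={\mathrm{Re\,}}Z$ is Hermitian, so $A^*=A$ and $|A^*|=|A|$. Hence
$$
|A|_{\qsym}=\sqrt{\frac{|A|^2+|A|^2}{2}}=|A|=|{\mathrm{Re\,}}Z|.
$$
Second, $B=i\,{\mathrm{Im\,}}Z$ is skew-Hermitian, so $B^*=-B$. Therefore $B^*B=BB^*=({\mathrm{Im\,}}Z)^2$, which gives $|B|=|B^*|=|{\mathrm{Im\,}}Z|$ and consequently $|B|_{\qsym}=|{\mathrm{Im\,}}Z|$. Substituting these two identities into the inequality above yields
$$
|Z|_{\qsym}\le U |{\mathrm{Re\,}}Z| U^* + V |{\mathrm{Im\,}}Z|V^*,
$$
which is the assertion.

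In the real case, where $\bM_n$ is read as the space of real matrices, the factor $i$ is not available. The same computation goes through if one takes $B=Z-{\mathrm{Re\,}}Z=(Z-Z^*)/2$, which is skew-symmetric. Then $|B|=|B^*|=|(Z-Z^*)/2|$, and this is the natural interpretation of $|{\mathrm{Im\,}}Z|$ for real matrices, since $|(Z-Z^*)/2i|=|(Z-Z^*)/2|$.

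There is no genuine obstacle in this argument. Its only point is the choice of decomposition $Z={\mathrm{Re\,}}Z+i\,{\mathrm{Im\,}}Z$: both summands are normal, and the quadratic symmetric modulus of a normal matrix coincides with its ordinary modulus.
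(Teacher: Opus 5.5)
Your proof is correct and follows the paper's approach. The paper does not write the argument out; it presents this corollary as a special case of Zhang's Theorem~\ref{thZ}. Your specialization $A={\mathrm{Re\,}}Z$, $B=i\,{\mathrm{Im\,}}Z$ is exactly that: both summands are normal, so their quadratic symmetric moduli reduce to $|{\mathrm{Re\,}}Z|$ and $|{\mathrm{Im\,}}Z|$.
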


\vskip 5pt
\begin{cor}\label{symc} Let $Z\in\bM_n$. Then, for all $j,k=0,1,2,\ldots$, 
$$
\mu_{1+j+k}^{\downarrow}\left(|Z|_{\qsym}\right) \le \mu_{1+j}^{\downarrow}({\mathrm{|Re\,}}Z|) + \mu_{1+k}^{\downarrow}(|{\mathrm{Im\,}}Z|).
$$
\end{cor}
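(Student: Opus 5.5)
Corollary \ref{symc} follows from Corollary \ref{symq} together with Weyl's inequality for eigenvalues of sums of Hermitian matrices.

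First, apply Corollary \ref{symq} to $Z$. This gives unitaries $U,V$ with
$$
|Z|_{\qsym} \le U|{\mathrm{Re\,}}Z|U^* + V|{\mathrm{Im\,}}Z|V^*.
$$
Both sides are positive semidefinite. By the Courant--Fischer minimax principle, the Loewner order $X\le Y$ gives $\mu_i^{\downarrow}(X)\le \mu_i^{\downarrow}(Y)$ for every $i$. Hence
$$
\mu_{1+j+k}^{\downarrow}\left(|Z|_{\qsym}\right)\le \mu_{1+j+k}^{\downarrow}\left(U|{\mathrm{Re\,}}Z|U^* + V|{\mathrm{Im\,}}Z|V^*\right).
$$

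Next, use Weyl's inequality for Hermitian $H,K$:
$$
\mu_{1+j+k}^{\downarrow}(H+K)\le \mu_{1+j}^{\downarrow}(H)+\mu_{1+k}^{\downarrow}(K), \qquad j+k\le n-1.
$$
Apply it with $H=U|{\mathrm{Re\,}}Z|U^*$ and $K=V|{\mathrm{Im\,}}Z|V^*$. Unitary congruence preserves eigenvalues, so $\mu_{1+j}^{\downarrow}(H)=\mu_{1+j}^{\downarrow}(|{\mathrm{Re\,}}Z|)$ and $\mu_{1+k}^{\downarrow}(K)=\mu_{1+k}^{\downarrow}(|{\mathrm{Im\,}}Z|)$. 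This gives the claim whenever $1+j+k\le n$.

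For completeness, here is why Weyl's inequality holds. Take the subspace $E$ spanned by the eigenvectors of $H$ for its eigenvalues $\mu_{1+j}^{\downarrow}(H),\dots,\mu_n^{\downarrow}(H)$; it has dimension $n-j$. Take the analogous subspace $F$ for $K$, of dimension $n-k$. Then $E\cap F$ has dimension at least $n-j-k$. On a unit vector $x\in E\cap F$,
$$
\langle (H+K)x,x\rangle\le \mu_{1+j}^{\downarrow}(H)+\mu_{1+k}^{\downarrow}(K),
$$
and minimax then bounds $\mu_{1+j+k}^{\downarrow}(H+K)$ by the same quantity.

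If $1+j+k>n$, the convention is either that the left-hand side is $0$ or that the statement is not considered. In the first case the inequality holds trivially, because the right-hand side is a sum of eigenvalues of positive matrices and is therefore nonnegative. The same applies if an index on the right exceeds $n$ and the corresponding eigenvalue is set to $0$.

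There is no real obstacle: all the substance is in Corollary \ref{symq}. The only point needing care is this index bookkeeping when $1+j+k>n$, which should be stated explicitly.
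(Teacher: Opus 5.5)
Your proof is correct and follows the paper's own route. The paper notes that the statement follows from Corollary~\ref{symq} by a straightforward application of Weyl's inequalities, and you do exactly that, with Loewner monotonicity of eigenvalues and the zero convention for indices exceeding $n$ handled correctly.
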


Here
$\lambda_j^{\downarrow}(|X|)$, $j=1,2,\ldots$, denote the singular values of $X\in\bM_n$ arranged  in the  nonincreasing order,  with convention  $\lambda_k^{\downarrow}(|X|)=0$ for $k>n$. The second corollary follows from the first one by a straightforward application of the classical Weyl inequalities.

For the optimality of these two corollaries, we refer to the discussion in [5].
Since $\sqrt{t}$ is operator concave, Corollary  \ref{symq} is stronger than the following one \cite{BLsym} :

\vskip 5pt
\begin{cor}\label{syms} Let $Z\in\bM_n$. Then, for some unitary $U,V\in\bM_n$,
$$
|Z|_{\sym} \le U |{\mathrm{Re\,}}Z| U^* + V |{\mathrm{Im\,}}Z|V^*.
$$
\end{cor}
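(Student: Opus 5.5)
The plan is to derive Corollary \ref{syms} from Corollary \ref{symq}, as the sentence preceding the statement suggests. The only tool needed is the operator concavity of $t\mapsto\sqrt t$ on positive matrices, which gives the inequality $|Z|_{\sym}\le |Z|_{\qsym}$.

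\textbf{Step 1.} We first prove $|Z|_{\sym}\le |Z|_{\qsym}$. Put $P=|Z|^2$ and $Q=|Z^*|^2$, both positive semidefinite. Operator concavity of the square root at midpoints gives
$$
\frac{P^{1/2}+Q^{1/2}}{2}\le \left(\frac{P+Q}{2}\right)^{1/2}.
$$
The left side is $(|Z|+|Z^*|)/2=|Z|_{\sym}$ and the right side is $|Z|_{\qsym}$.

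There is also an elementary alternative, which avoids operator concavity. Set $S=(|Z|+|Z^*|)/2$ and $D=(|Z|-|Z^*|)/2$, both Hermitian. A direct expansion gives $S^2+D^2=(|Z|^2+|Z^*|^2)/2$. Since $D^2\ge 0$, this yields $S^2\le |Z|_{\qsym}^2$. Operator monotonicity of $t\mapsto t^{1/2}$ (Löwner--Heinz) then gives $S\le |Z|_{\qsym}$.

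\textbf{Step 2.} Now apply Corollary \ref{symq}. It provides unitaries $U,V$ such that
$$
|Z|_{\qsym}\le U|{\mathrm{Re\,}}Z|U^*+V|{\mathrm{Im\,}}Z|V^*.
$$
Chaining this with Step 1 by transitivity of the Löwner order gives the claim with the same $U$ and $V$.

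\textbf{Step 3 (real case).} When $\bM_n$ is read as real matrices, no new argument is needed. Both operator concavity and operator monotonicity of the square root hold for real symmetric matrices. Corollary \ref{symq} supplies orthogonal $U,V$ in that setting.

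\textbf{Main obstacle.} There is essentially none beyond invoking operator concavity, or equivalently Löwner--Heinz, correctly. A tempting shortcut is to pass from $S^2\le T^2$ to $S\le T$ by treating squaring as monotone. That step is invalid, since squaring is not operator monotone. The proof must go through the square root, which is operator monotone, and that is what Step 1 does.
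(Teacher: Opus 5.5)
Your proposal is correct and matches the paper's argument: the paper obtains this corollary from Corollary~\ref{symq} through the operator concavity of $t\mapsto\sqrt{t}$, which gives $|Z|_{\sym}\le|Z|_{\qsym}$, and then chains the two inequalities with the same unitaries, exactly as in your Steps 1--2. Your alternative for Step 1, which uses $S^2+D^2=|Z|_{\qsym}^2$ together with L\"owner--Heinz, is also valid; it replaces midpoint operator concavity by the operator monotonicity of the square root.
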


Despite of this special case,  Theorem \ref{thZ} does not hold for the classical symmetric modulus $|\cdot|_{\sym}$.  Zhang \cite{TZsym} gave a simple example with two
matrices $A,B\in\bM_2$ such that $$\| |A+B|_{\sym}\|_{\infty}> \| |A|_{\sym}\|_{\infty}+\| |B|_{\sym}\|_{\infty},$$
where $\|\cdot\|_{\infty}$ stands for the operator norm. 

This paper gives some substitutes to  Theorem \ref{thZ} for the  symmetric modulus $|\cdot|_{\sym}$. Section 2 states a main inequality  via unitary orbits.  A few corollaries follow, these are triangle inequalities for the classical symmetric modulus.  

In section 3 we prove our main inequality   and obtain further corollaries in case of Hermitian sums. This lead to introduce the class of {\it polar Hermitian matrices},  containing both involutory  matrices and Hermitian matrices.

The short Section 4 deals with   a related triangle inequality for  normal matrices $A,B\in \bM_n$ :
$$
|A+B| \le |A|+|B| +\frac{1}{4}V|A+B|V^*
$$
for some unitary matrix $V$. This inequality  is essentially well-known, the novelty is the proof of the sharpness of the constant $1/4$ when $n\ge 3$.

 Section 5 proposes  a simple proof of Theorem \ref{thZ} and shows the links between triangle inequalities and subbaditivity inequalities for concave  functions.

 Section 6 contains several comments and shows how some  our results can be improved  via the matrix geometric mean and logmajorisation.

\section{The  symmetric modulus}

\begin{theorem}\label{thsym} Let $X_1,\dots, X_m\in\bM_n$. Then, there exists a unitary $V\in\bM_n$ such that, for any $\beta>0$,
$$
\left|\sum_{k=1}^m X_k\right|_{\sym} \le \beta\left\{\sum_{k=1}^m \left|X_k\right|_{\sym}\right\} +\frac{1}{8\beta}\left\{ V\left(\sum_{k=1}^m \left|X_k\right|\right) V^* + V^*\left(\sum_{k=1}^m \left|X^*_k\right|\right) V\right\}.
$$
\end{theorem}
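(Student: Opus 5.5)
The plan is to pass to the Hermitian dilation, where the triangle inequality is available in block form, and then compress back with an isometry built from the polar decomposition of $S:=\sum_{k=1}^m X_k$.

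For $Z\in\bM_n$ put
$$
\widehat Z=\begin{pmatrix} 0 & Z\\ Z^* & 0\end{pmatrix}\in\bM_{2n}, \qquad |\widehat Z|=\begin{pmatrix} |Z^*| & 0\\ 0& |Z|\end{pmatrix}.
$$
For a Hermitian matrix $H$ we have $-|H|\le H\le |H|$. Summing these over $H=\widehat{X_k}$ gives $T-\widehat S\ge 0$, where
$$
T:=\sum_{k=1}^m |\widehat{X_k}| = P\oplus Q,\qquad P:=\sum_{k=1}^m |X_k^*|,\quad Q:=\sum_{k=1}^m |X_k|.
$$
In block form this reads
$$
\begin{pmatrix} P & -S\\ -S^* & Q\end{pmatrix}\ge 0 .
$$

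Next fix a polar decomposition $S=W|S|$ with $W$ unitary. Then $W^*S=|S|$, $SW^*=|S^*|$ and $|S^*|=W|S|W^*$. For $t>0$, compress the positive block matrix with the $2n\times n$ matrix $\begin{pmatrix} tW\\ I\end{pmatrix}$:
$$
0\le t^2W^*PW + Q - t(W^*S+S^*W) = t^2W^*PW+Q-2t|S|.
$$
This gives $|S|\le \frac t2 W^*PW+\frac1{2t}Q$. Compressing instead with $\begin{pmatrix} I\\ tW^*\end{pmatrix}$ gives
$$
0\le P+t^2WQW^*-t(SW^*+WS^*)=P+t^2WQW^*-2t|S^*|,
$$
so $|S^*|\le \frac1{2t}P+\frac t2 WQW^*$. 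Averaging the two bounds,
$$
|S|_{\sym}\le \frac1{4t}(P+Q)+\frac t4\,(W^*PW+WQW^*).
$$

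Finally note $\sum_k|X_k|_{\sym}=(P+Q)/2$ and set $\beta=1/(2t)$. The right-hand side becomes
$$
\beta\sum_k|X_k|_{\sym}+\frac1{8\beta}\bigl(VQV^*+V^*PV\bigr)\quad\text{with } V=W,
$$
which is exactly the claimed inequality. The unitary $V=W$ depends only on $S$, not on $\beta$, as the statement requires.

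The only real step is choosing the compressing isometries so that the off-diagonal terms collapse to $|S|$ and $|S^*|$ through the polar factor. Everything else is bookkeeping. The one point to check is that a unitary polar factor $W$ exists even when $S$ is singular, which is standard in finite dimensions; the real case works the same way with $W$ orthogonal.
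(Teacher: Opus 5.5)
Your proof is correct and is essentially the paper's argument. The paper starts from the same positive block matrices $\begin{bmatrix}\sum_k|X_k| & S^*\\ S & \sum_k|X_k^*|\end{bmatrix}$ and $\begin{bmatrix}\sum_k|X_k^*| & S\\ S^* & \sum_k|X_k|\end{bmatrix}$, conjugates them by $I\oplus V^*$ and $I\oplus V$ (with $S=V|S|$), adds them to obtain $\bA$ with off-diagonal $|S|_{\sym}$, and compresses once with $\begin{bmatrix}\sqrt{\beta}I & -(\sqrt{\beta})^{-1}I\end{bmatrix}$; by linearity this is the same as your two compressions followed by averaging, and the paper's ordering merely has the side benefit of exhibiting the positive block matrix $\bA$, which it reuses for the geometric-mean refinement in the concluding section.
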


\vskip 5pt
We will prove it in the next section. Here, we establish some corollaries. First, with $\beta=1/2$, we have :

\vskip 5pt
\begin{cor}  Let $X_1,\dots, X_m\in\bM_n$. Then for some unitary $V\in\bM_n$,
$$
\left|\sum_{k=1}^m X_k\right|_{\sym} \le \sum_{k=1}^m \frac{\left|X_k\right| +\left|X^*_k\right| + V\left|X_k\right| V^* +V^*\left|X^*_k\right| V}{4}.
$$
\end{cor}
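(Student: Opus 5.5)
The corollary follows directly from Theorem~\ref{thsym} with $\beta=1/2$, so the plan is to specialise that inequality and regroup the terms.

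\emph{Step 1 (specialise $\beta$).} Take the unitary $V$ given by Theorem~\ref{thsym}; it does not depend on $\beta$. For $\beta=1/2$ we have $1/(8\beta)=1/4$, and the right-hand side becomes
$$
\frac12\sum_{k=1}^m \frac{|X_k|+|X_k^*|}{2} \;+\; \frac14\left\{ V\Big(\sum_{k=1}^m |X_k|\Big)V^* + V^*\Big(\sum_{k=1}^m |X_k^*|\Big)V\right\}.
$$

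\emph{Step 2 (regroup).} Conjugation is linear, so $V\big(\sum_k |X_k|\big)V^*=\sum_k V|X_k|V^*$, and likewise for the $V^*(\cdot)V$ term. Collecting everything termwise with the common factor $1/4$ gives
$$
\sum_{k=1}^m \frac{|X_k|+|X_k^*|+V|X_k|V^*+V^*|X_k^*|V}{4},
$$
which is exactly the claimed bound.

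This is a direct specialisation, so there is no real obstacle. The only point to check is that the single unitary $V$ from Theorem~\ref{thsym} serves for every $\beta$, in particular for $\beta=1/2$, which the theorem's quantifier order ("there exists $V$ such that, for any $\beta>0$") guarantees. The substantive work lies in proving Theorem~\ref{thsym} itself.
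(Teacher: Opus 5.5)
Your proposal is correct and is exactly the paper's argument: take $\beta=1/2$ in Theorem~\ref{thsym}, so that $\beta\,|X_k|_{\sym}=(|X_k|+|X_k^*|)/4$ and $1/(8\beta)=1/4$, then distribute the conjugations over the sums and regroup term by term. The $\beta$-independence of $V$ is not actually needed here, since only the single value $\beta=1/2$ is used.
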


\vskip 5pt
Taking traces, we recapture the triangle inequality  : ${\mathrm{Tr}}\, |X+Y| \le {\mathrm{Tr}}\, |X|+{\mathrm{Tr}}\, |Y| $.
Next, 
Since $|X_k|\le 2|X_k|_{\sym}$ and $|X^*|\le 2|X_k|_{\sym}$, we get from Theorem \ref{thsym} :

\vskip 5pt
\begin{cor}\label{corsym}  Let $X_1,\cdots, X_m\in\bM_n$. Then for some unitary $V\in\bM_n$,
$$
\left|\sum_{k=1}^m X_k\right|_{\sym} \le \beta\left\{\sum_{k=1}^m \left|X_k\right|_{\sym}\right\} +\frac{1}{4\beta}\left\{ V\left(\sum_{k=1}^m \left|X_k\right|_{\sym}\right) V^* + V^*\left(\sum_{k=1}^m \left|X_k\right|_{\sym}\right) V\right\}.
$$
\end{cor}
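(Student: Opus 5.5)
Corollary \ref{corsym} follows from Theorem \ref{thsym} with no new ideas: we simply bound the second bracket on the right-hand side. Fix $\beta>0$ and take the unitary $V$ given by Theorem \ref{thsym}; it depends only on the $X_k$, not on $\beta$. Then
$$
\left|\sum_{k=1}^m X_k\right|_{\sym} \le \beta\left\{\sum_{k=1}^m \left|X_k\right|_{\sym}\right\} +\frac{1}{8\beta}\left\{ V\left(\sum_{k=1}^m \left|X_k\right|\right) V^* + V^*\left(\sum_{k=1}^m \left|X^*_k\right|\right) V\right\}.
$$

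The only input is the elementary operator inequality
$$
|X_k|\le |X_k|+|X_k^*| = 2|X_k|_{\sym},
$$
which holds because $|X_k^*|\ge 0$. The same argument gives $|X_k^*|\le 2|X_k|_{\sym}$. Summing over $k$ gives
$$
\sum_k |X_k|\le 2\sum_k|X_k|_{\sym} \quad\text{and}\quad \sum_k|X_k^*|\le 2\sum_k|X_k|_{\sym}.
$$

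Unitary congruence $T\mapsto WTW^*$ preserves the Löwner order, for any unitary $W$. Applying this with $W=V$ to the first sum and with $W=V^*$ to the second, we get
$$
V\Big(\sum_k|X_k|\Big)V^*\le 2V\Big(\sum_k|X_k|_{\sym}\Big)V^*, \qquad V^*\Big(\sum_k|X_k^*|\Big)V\le 2V^*\Big(\sum_k|X_k|_{\sym}\Big)V.
$$
Multiplying both inequalities by $1/(8\beta)>0$, the factor $2$ turns $1/(8\beta)$ into $1/(4\beta)$. Substituting into the display from Theorem \ref{thsym} gives exactly the stated inequality, with the same unitary $V$, valid simultaneously for all $\beta>0$.

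There is no real obstacle. The one point to check is that the same $V$ must work in both conjugated terms, one conjugated by $V$ and the other by $V^*$. This is automatic, since each bound is applied termwise inside a conjugation that is already fixed.
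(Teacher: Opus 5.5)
Your proof is correct and is exactly the paper's argument: apply Theorem \ref{thsym}, then use $|X_k|\le 2|X_k|_{\sym}$ and $|X_k^*|\le 2|X_k|_{\sym}$ inside the fixed unitary conjugations, which turns $1/(8\beta)$ into $1/(4\beta)$. Your remark that the same $V$ works for every $\beta>0$ is also consistent with Theorem \ref{thsym}.
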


\vskip 5pt
An application of Weyl's inequalities yields, for all integers $j>0$ and all positive matrices $S,T,R\in\bM_n$, 
$$
\lambda^{\downarrow}_{1+3j}(S+T+R) \le \lambda^{\downarrow}_{1+j}(S) +  \lambda^{\downarrow}_{1+j}(T) + 
 \lambda^{\downarrow}_{1+j}(R).
$$
Therefore, we have from Corollary \ref{corsym},
$$
\lambda^{\downarrow}_{1+3j}\left(\left|\sum_{k=1}^m X_k\right|_{\sym}\right) \le \beta\lambda^{\downarrow}_{1+j}\left(\sum_{k=1}^m \left|X_k\right|_{\sym}\right) + \frac{1}{2\beta}\lambda^{\downarrow}_{1+j}\left(\sum_{k=1}^m \left|X_k\right|_{\sym}\right).
$$
Now, the convex function $\beta\to \beta +\frac{1}{2\beta}$ is minimized on $(0,\infty)$ when its derivative vanishes, i.e.,   $\beta=1/\sqrt{2}$, and then takes its minimal value $\sqrt{2}$.
We thus obtain the bound :

\vskip 5pt
\begin{cor}\label{coreig}  Let $X_1,\dots, X_m\in\bM_n$. Then, for all integers $j=0,1,\ldots$,
$$
\lambda^{\downarrow}_{1+3j}\left(\left|\sum_{k=1}^m X_k\right|_{\sym}\right) \le \sqrt{2} \lambda^{\downarrow}_{1+j}\left(\sum_{k=1}^m \left|X_k\right|_{\sym}\right).
$$
\end{cor}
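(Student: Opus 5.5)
The statement to prove is Corollary \ref{coreig}, and the argument just before it essentially gives the proof. I would make it rigorous in three steps: a Weyl inequality for three positive summands, its application to Corollary \ref{corsym} with a fixed $\beta$, and the optimisation in $\beta$.

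\emph{Weyl step.} Recall Weyl's inequality for Hermitian $H,K\in\bM_n$:
$$
\lambda^{\downarrow}_{1+p+q}(H+K)\le \lambda^{\downarrow}_{1+p}(H)+\lambda^{\downarrow}_{1+q}(K),
$$
with the convention that eigenvalues of index larger than $n$ vanish. For positive matrices this convention is harmless, because both sides are nonnegative. Apply it twice, first to $S+(T+R)$ with $p=j$, $q=2j$, and then to $T+R$ with $p=q=j$. This gives
$$
\lambda^{\downarrow}_{1+3j}(S+T+R)\le \lambda^{\downarrow}_{1+j}(S)+\lambda^{\downarrow}_{1+j}(T)+\lambda^{\downarrow}_{1+j}(R).
$$

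\emph{Applying it to Corollary \ref{corsym}.} Fix $\beta>0$ and set $P=\sum_k|X_k|_{\sym}$. Corollary \ref{corsym} reads $|\sum_k X_k|_{\sym}\le S+T+R$ with
$$
S=\beta P,\qquad T=\tfrac{1}{4\beta}VPV^*,\qquad R=\tfrac{1}{4\beta}V^*PV .
$$
Eigenvalues are monotone under the Löwner order, so $\lambda^{\downarrow}_{1+3j}$ of the left side is at most $\lambda^{\downarrow}_{1+3j}(S+T+R)$. The matrices $VPV^*$ and $V^*PV$ are unitarily similar to $P$, so their eigenvalues coincide with those of $P$. The three-term Weyl bound therefore gives
$$
\lambda^{\downarrow}_{1+3j}\Bigl(\Bigl|\sum_k X_k\Bigr|_{\sym}\Bigr)\le \Bigl(\beta+\tfrac{1}{2\beta}\Bigr)\lambda^{\downarrow}_{1+j}(P).
$$
This holds for every $\beta>0$; the unitary $V$ may depend on $\beta$, which does no harm because $V$ disappears in this step.

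\emph{Optimising.} The function $\beta\mapsto \beta+\frac{1}{2\beta}$ is minimised on $(0,\infty)$ at $\beta=1/\sqrt2$, where it equals $\sqrt2$. Choosing this $\beta$ yields the claimed bound.

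None of these steps is a real obstacle. The only points needing care are the index bookkeeping when $1+3j>n$, which the positivity convention handles, and the observation that conjugating by $V$ or $V^*$ leaves the spectrum of $P$ unchanged.
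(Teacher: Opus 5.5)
Your proposal is correct and follows the paper's own argument. You apply Corollary~\ref{corsym}, use the three-term Weyl inequality (which you correctly obtain by iterating the two-term one), note the unitary invariance of the spectrum of $\sum_k |X_k|_{\sym}$, and minimise $\beta+\frac{1}{2\beta}$ at $\beta=1/\sqrt{2}$; your remarks on the index convention when $1+3j>n$ and on $V$ possibly depending on $\beta$ are accurate and make explicit what the paper leaves implicit.
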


\vskip 5pt Applying the triangle inequality for any symmetric norm $\|\cdot\|$ (i.e a unitarily invariant norm) instead of Weyl's inequalities, Corollary \ref{corsym} gives in the same way  :

\vskip 5pt
\begin{cor}\label{cornor}   Let $X_1,\dots, X_m\in\bM_n$. Then, for all symmetric norms,
$$
\left\|\left|\sum_{k=1}^m X_k\right|_{\sym}\right\| \le \sqrt{2} \left\|\sum_{k=1}^m \left|X_k\right|_{\sym}\right\|.
$$
\end{cor}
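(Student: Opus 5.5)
Set $S:=\left|\sum_{k=1}^m X_k\right|_{\sym}$ and $T:=\sum_{k=1}^m |X_k|_{\sym}$; both are positive. The plan is to start from Corollary~\ref{corsym} with the unitary $V$ it provides, and apply a symmetric norm $\|\cdot\|$ to both sides.

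First, $0\le S\le R$ with $R$ positive implies $\|S\|\le\|R\|$ for every symmetric norm. This is the standard monotonicity: Weyl's monotonicity principle gives $\lambda_j^{\downarrow}(S)\le\lambda_j^{\downarrow}(R)$ for all $j$, hence weak majorisation, and the Fan dominance principle then gives the norm inequality. Next, use the triangle inequality for $\|\cdot\|$ and unitary invariance, $\|VTV^*\|=\|V^*TV\|=\|T\|$. With these, Corollary~\ref{corsym} yields, for every $\beta>0$,
$$
\|S\|\le \beta\|T\| +\frac{1}{4\beta}\bigl(\|T\|+\|T\|\bigr)=\Bigl(\beta+\frac{1}{2\beta}\Bigr)\|T\|.
$$

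The point to check is that the unitary $V$ does not depend on $\beta$. Theorem~\ref{thsym} asserts a single $V$ valid for all $\beta>0$, and Corollary~\ref{corsym} inherits this property. In any case, after taking norms $V$ disappears entirely, so we may fix any $\beta$ before choosing $V$.

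Finally, choose $\beta=1/\sqrt2$, which minimises $\beta+1/(2\beta)$ and gives the value $\sqrt2$. This yields $\|S\|\le\sqrt2\,\|T\|$, which is the claim. There is no real obstacle here beyond correctly invoking monotonicity of symmetric norms on the positive cone; all the substance lies in Theorem~\ref{thsym}.
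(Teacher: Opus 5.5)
Your proof is correct and takes the same route as the paper: you apply a symmetric norm to Corollary~\ref{corsym}, use monotonicity on positive matrices, the triangle inequality and unitary invariance to get the bound $(\beta+\frac{1}{2\beta})\|T\|$, then take $\beta=1/\sqrt{2}$. The paper leaves the monotonicity step implicit; your justification of it via Weyl's monotonicity principle and Fan dominance is correct.
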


\vskip 5pt
\begin{question} Is $\sqrt{2}$ the best possible constant in Corollaries \ref{coreig}-\ref{cornor} ?
\end{question}

\section{Proof of Theorem \ref{thsym} and polar Hermitians}

We prove Theorem \ref{thsym} for two matrices $X,Y\in\bM_n$, the proof for $m$ matrices being exactly the same. So we must show that there exists a unitary $V\in\bM_n$ such that for all $\beta>0$,
\begin{equation}\label{eqpr}
|X+Y|_{\sym} \le \beta\left(|X|_{\sym} +|Y|_{\sym}\right) +\frac{ V(|X|+|Y|)V^* +V^*(|X^*|+|Y^*|)V}{8\beta}.
\end{equation}

\begin{proof} The polar decomposition $Z=|Z^*|^{1/2}U|Z|^{1/2}$ shows that the following four block matrices are positive semi-definite :
$$
\begin{bmatrix} |X|&X^* \\ X&|X^*|
\end{bmatrix}, \quad \begin{bmatrix} |X^*|&X \\ X^*&|X|
\end{bmatrix}, \quad 
\begin{bmatrix} |Y|&Y^* \\ Y&|Y^*|
\end{bmatrix}, \quad \begin{bmatrix} |Y^*|&Y \\ Y^*&|Y|
\end{bmatrix}.
$$
So, adding the first one with the third one, and the second one with the fourth one,  we have two positive semi-definite matrices,
\begin{equation}\label{eqbl0}
\begin{bmatrix} |X|+|Y|&X^*+Y^* \\ X+Y&|X^*| +|Y^*|
\end{bmatrix}, \quad 
\begin{bmatrix} |X^*|+|Y^*|&X+Y \\ X^*+Y^*&|X| +|Y|
\end{bmatrix}.
\end{equation}
Now, consider the polar decompositions $X+Y=V|X+Y|$ and $X^*+Y^*=V^*|X^*+Y^*|$  and perform two unitary conjugations; one for the first matrix in \eqref{eqbl0},
\begin{equation}\label{eqbl1}
\begin{bmatrix} I&0 \\ 0&V^* 
\end{bmatrix}
\begin{bmatrix} |X|+|Y|&X^*+Y^* \\ X+Y&|X^*| +|Y^*|
\end{bmatrix}
\begin{bmatrix} I&0 \\ 0&V 
\end{bmatrix}=
\begin{bmatrix} |X|+|Y|&|X+Y| \\ |X+Y|&V^*(|X^*| +|Y^*|)V
\end{bmatrix},
\end{equation}
and the other one for the second  matrix in \eqref{eqbl0},
\begin{equation}\label{eqbl2}
\begin{bmatrix} I&0 \\ 0&V 
\end{bmatrix}
\begin{bmatrix} |X|^*+|Y^*|&X+Y \\ X^*+Y^*&|X| +|Y|
\end{bmatrix}
\begin{bmatrix} I&0 \\ 0&V^* 
\end{bmatrix}=
\begin{bmatrix} |X^*|+|Y^*|&|X^*+Y^*| \\ |X^*+Y^*|&V(|X| +|Y|)V^*
\end{bmatrix}.
\end{equation}
Next we add the right hand sides in  \eqref{eqbl1} and \eqref{eqbl2} and obtain :
$$
2\bA:=\begin{bmatrix}|X|+|Y|+ |X^*|+|Y^*|&|X+Y|+|X^*+Y^*| \\ |X+Y|+ |X^*+Y^*|& V^*(|X^*| +|Y^*|)V+V(|X| +|Y|)V^*
\end{bmatrix},
$$
hence
\begin{equation}\label{eqbl}
\bA=\begin{bmatrix} |X|_{\sym} +|Y|_{\sym} &|X+Y|_{\sym} \\
|X+Y|_{\sym}&\left\{(V(|X| +|Y|)V^*+ V^*(|X^*| +|Y^*|)V\right\}/2
\end{bmatrix}.
\end{equation}
Left and right multiplication of $\bA$ by
$$
\begin{bmatrix} I&0 \\ 0&I/2
\end{bmatrix},
$$
where $I$ is the identity of size $n$,
yields the positive matrix
$$
\bB:=\begin{bmatrix} |X|_{\sym} +|Y|_{\sym} &(|X+Y|_{\sym})/2 \\
(|X+Y|_{\sym})/2&\left\{(V(|X| +|Y|)V^*+ V^*(|X^*| +|Y^*|)V\right\}/8
\end{bmatrix}.
$$
Now, we observe that the positivity of
$$
\begin{bmatrix} \sqrt{\beta}I & - (\sqrt{\beta})^{-1} I\end{bmatrix} \bB \begin{bmatrix} \sqrt{\beta} I \\ - (\sqrt{\beta})^{-1} I\end{bmatrix}
$$
gives the desired inequality \eqref{eqpr}.
\end{proof}

\vskip 5pt
\begin{remark}\label{remproof} The proof shows that the unitary operator $V$ occurring in Theorem \ref{thsym} is the unitary factor in the polar decomposition
$$
\sum_{k=1}^m X_k = V\left| \sum_{k=1}^m X_k\right|.
$$
\end{remark}

\vskip 5pt
\begin{definition} We say that a matrix $S$ is  {\it polar Hermitian} if it has a polar decomposition $S=V|S|$ in which $V$ is a scalar multiple of a Hermitian unitary. 
\end{definition}

\vskip 5pt
\begin{example} If $S\in\bM_n$ is an involution (a symmetry, $S=S^{-1}$) then S is a polar Hermitian matrix. In fact the unitary part  of $S$ is Hermitian. This was noted in \cite{Ik}.
Here is   a  simple proof from \cite{BLinvol}. Since $S=U|S|=|S^*|U$ and $S=S^{-1}$ we infer $U|S|=U^*|S^*|^{-1}$ and so $U=U^*$ as the polar decomposition of an invertible matrix is unique. The result \cite[Theorem 2.1]{BLinvol} says much more on the structure of involutions. 
\end{example}

\vskip 5pt
\begin{theorem}\label{thsymh} Let $X_1,\dots, X_m\in\bM_n$ be such that their sum is polar Hermitian.  Then, there exists a Hermitian  unitary $V\in\bM_n$ such that, for any $\beta>0$,
$$
\left|\sum_{k=1}^m X_k\right|_{\sym} \le \beta\left\{\sum_{k=1}^m \left|X_k\right|_{\sym}\right\} +\frac{1}{4\beta}\left\{ V\left(\sum_{k=1}^m \left|X_k\right|_{\sym}\right) V\right\}.
$$
\end{theorem}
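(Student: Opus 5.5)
The plan is to rerun the proof of Theorem \ref{thsym} verbatim, keeping track of the unitary, and then use the polar Hermitian hypothesis to merge the two conjugated terms on the right-hand side. As in that proof, I write it for two matrices $X,Y$; the case of $m$ matrices is identical.

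\textbf{Step 1: set up the unitary.} Put $S=X+Y$. By hypothesis $S$ has a polar decomposition $S=V|S|$ with $V=e^{i\theta}W$, where $W$ is a Hermitian unitary, i.e. $W=W^*=W^{-1}$.

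If $S$ is singular this $V$ need not be the canonical partial-isometry factor. So I first check that it still serves in the proof of Theorem \ref{thsym}. We have $SS^*=V|S|^2V^*$, hence $|S^*|=V|S|V^*$. Therefore
$$
S^*=|S|V^*=V^*\bigl(V|S|V^*\bigr)=V^*|S^*|.
$$
These are exactly the two decompositions $X+Y=V|X+Y|$ and $X^*+Y^*=V^*|X^*+Y^*|$ used in \eqref{eqbl1} and \eqref{eqbl2}.

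\textbf{Step 2: rerun the block-matrix argument.} The same steps give the positive block matrix $\bA$ of \eqref{eqbl}. Its lower-right entry is
$$
\tfrac12\bigl\{V(|X|+|Y|)V^*+V^*(|X^*|+|Y^*|)V\bigr\}.
$$
Here the phase cancels: $V(\cdot)V^*=W(\cdot)W$ and $V^*(\cdot)V=W(\cdot)W$. The entry therefore equals
$$
\tfrac12 W\bigl(|X|+|Y|+|X^*|+|Y^*|\bigr)W \;=\; W\bigl(|X|_{\sym}+|Y|_{\sym}\bigr)W .
$$
So
$$
\bA=\begin{bmatrix} |X|_{\sym}+|Y|_{\sym} & |X+Y|_{\sym}\\ |X+Y|_{\sym} & W(|X|_{\sym}+|Y|_{\sym})W\end{bmatrix}\ge 0 .
$$

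\textbf{Step 3: compress.} Apply the congruence by $\mathrm{diag}(I,I/2)$ to get a positive matrix $\bB$. Its off-diagonal blocks are $|X+Y|_{\sym}/2$ and its lower-right block is $W(|X|_{\sym}+|Y|_{\sym})W/4$. Then compress $\bB$ with $\begin{bmatrix}\sqrt\beta I & -(\sqrt\beta)^{-1}I\end{bmatrix}$ as before. This yields
$$
0\le \beta\bigl(|X|_{\sym}+|Y|_{\sym}\bigr)-|X+Y|_{\sym}+\frac1{4\beta}W\bigl(|X|_{\sym}+|Y|_{\sym}\bigr)W,
$$
which is the claim, with the Hermitian unitary $W$ playing the role of $V$ in the statement.

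The only point requiring care is Step 1: justifying that the polar Hermitian unitary, which is not necessarily unique when $S$ is singular, can be used in both conjugations. The computation above settles it, and everything else is bookkeeping.
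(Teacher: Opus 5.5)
Your proof is correct and follows the paper's route: the paper applies Theorem \ref{thsym} with the unitary taken to be the polar factor $e^{i\theta}W$ (Remark \ref{remproof}), and the phase cancels exactly as in your Step 2. Your Step 1 checks that any unitary $V$ with $S=V|S|$ also satisfies $S^*=V^*|S^*|$ when $S$ is singular, which the paper leaves implicit; rerunning the block-matrix computation is harmless, though substituting $V=e^{i\theta}W$ directly into the final inequality of Theorem \ref{thsym} would suffice.
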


\vskip 5pt
\begin{proof}   $S=\sum_{k=1}^m X_k$    has  a  polar decomposition $S=e^{i\theta} V|S|$ for some
 Hermitian unitary $V$.  Applying Theorem \ref{thsym} combined with Remark \ref{remproof} completes the proof.
\end{proof}

\vskip 5pt
\begin{cor}\label{coreigh}  Let $X_1,\dots, X_m\in\bM_n$ be such that their sum is polar Hermitian. Then, for all integers $j=0,1,\ldots$,
$$
\lambda^{\downarrow}_{1+2j}\left(\left|\sum_{k=1}^m X_k\right|_{\sym}\right) \le \lambda^{\downarrow}_{1+j}\left(\sum_{k=1}^m \left|X_k\right|_{\sym}\right).
$$
\end{cor}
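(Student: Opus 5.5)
We derive the statement from Theorem \ref{thsymh} with $\beta=1/2$ and Weyl's inequalities, as Corollary \ref{coreig} was derived from Corollary \ref{corsym}. Put
$$T:=\sum_{k=1}^m |X_k|_{\sym} \qquad\text{and}\qquad S:=\sum_{k=1}^m X_k .$$
Since $S$ is polar Hermitian, Theorem \ref{thsymh} gives a Hermitian unitary $V$ such that, for every $\beta>0$,
$$
|S|_{\sym} \le \beta T + \frac{1}{4\beta} VTV .
$$
Because $V$ is a Hermitian unitary, $V=V^*$, so $VTV=VTV^*$ is unitarily congruent to $T$ and has the same eigenvalues. The function $\beta\mapsto \beta+\frac{1}{4\beta}$ is minimal at $\beta=1/2$, where it equals $1$. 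With this choice,
$$
|S|_{\sym}\le \frac{T+VTV^*}{2}.
$$

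Next, Weyl's monotonicity principle turns this operator inequality into the eigenvalue inequality
$$\lambda^{\downarrow}_{1+2j}(|S|_{\sym})\le \lambda^{\downarrow}_{1+2j}\left(\tfrac{1}{2}T+\tfrac{1}{2}VTV^*\right).$$
Weyl's inequality for sums of two Hermitian matrices, $\lambda^{\downarrow}_{1+j+k}(P+Q)\le \lambda^{\downarrow}_{1+j}(P)+\lambda^{\downarrow}_{1+k}(Q)$, applied with $k=j$, then gives
$$
\lambda^{\downarrow}_{1+2j}\left(\tfrac12 T+\tfrac12 VTV^*\right)\le \tfrac12\lambda^{\downarrow}_{1+j}(T)+\tfrac12\lambda^{\downarrow}_{1+j}(VTV^*)=\lambda^{\downarrow}_{1+j}(T).
$$
Combining the two displays yields the claim. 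We use the convention that eigenvalues with index larger than $n$ are zero, which keeps the Weyl inequality valid when $1+2j>n$. In that range the left side is $0$ and the inequality is trivial anyway, since $T\ge 0$.

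There is no real obstacle beyond confirming that Theorem \ref{thsymh} applies in the form stated. In particular, the scalar phase $e^{i\theta}$ in the polar decomposition of $S$ must not affect $V$. It does not: Theorem \ref{thsym} with Remark \ref{remproof} gives the bound with the unitary $e^{i\theta}V$, and the conjugations $(e^{i\theta}V)(\cdot)(e^{i\theta}V)^*$ and $(e^{i\theta}V)^*(\cdot)(e^{i\theta}V)$ equal $V(\cdot)V$, since the phase cancels and $V=V^*$. Then $|X_k|+|X_k^*|=2|X_k|_{\sym}$ merges the two terms into $\frac{1}{4\beta}VTV$, which is exactly the form used above.
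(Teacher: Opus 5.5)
Your proof is correct and is the paper's argument: the paper's proof likewise takes $\beta=1/2$ in Theorem \ref{thsymh} and then applies Weyl's inequalities. Your extra checks are details the paper leaves implicit in its one-line proof of Theorem \ref{thsymh}: that the phase $e^{i\theta}$ cancels in both conjugations, and that the two terms merge into $\frac{1}{4\beta}VTV$.
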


\vskip 5pt
\begin{proof} Pick $\beta=1/2$ in Theorem \ref{thsymh} and use Weyl's inequalities.
\end{proof}

\vskip 5pt
Letting $i=1+j$ with $1+2j=2i-1\le n$, we infer :

\vskip 5pt
\begin{cor}\label{coreigh}  Let $X_1,\dots, X_m\in\bM_n$ be such that their sum is a Hermitian unitary. Then, for all integers $1\le i\le (n+1)/2$,
$$
1 \le \lambda^{\downarrow}_{i}\left(\sum_{k=1}^m \left|X_k\right|_{\sym}\right).
$$
\end{cor}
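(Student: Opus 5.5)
We derive this statement as a direct specialization of the first Corollary \ref{coreigh} (the one for polar Hermitian sums). The argument has two steps: check the hypothesis, then compute the left-hand side exactly.

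\textbf{Step 1: the hypothesis holds.} Let $S=\sum_{k=1}^m X_k$ and assume $S$ is a Hermitian unitary. Then $S^2=I$, so $S$ is an involution. By the Example above, $S$ is therefore polar Hermitian. More directly, $|S|=(S^*S)^{1/2}=I$, so $S=S\cdot I$ is a polar decomposition whose unitary factor $S$ is Hermitian. The first Corollary \ref{coreigh} thus applies to $X_1,\dots,X_m$.

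\textbf{Step 2: compute the left-hand side.} Since $S$ is unitary, $|S|=|S^*|=I$. Hence
$$
|S|_{\sym}=\frac{|S|+|S^*|}{2}=I,
$$
and every eigenvalue $\lambda^{\downarrow}_{\ell}(|S|_{\sym})$ equals $1$ for $1\le \ell\le n$.

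\textbf{Step 3: reindex.} Given $i$ with $1\le i\le (n+1)/2$, set $j=i-1\ge 0$. Then $1+2j=2i-1\le n$, so by Step 2,
$$
\lambda^{\downarrow}_{1+2j}(|S|_{\sym})=1.
$$
The first Corollary \ref{coreigh} gives
$$
1=\lambda^{\downarrow}_{2i-1}(|S|_{\sym})\le \lambda^{\downarrow}_{i}\Bigl(\sum_{k=1}^m |X_k|_{\sym}\Bigr),
$$
which is the claim.

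The only point needing care is the index range. The convention $\lambda_\ell^{\downarrow}=0$ for $\ell>n$ is why we need $2i-1\le n$, i.e. $i\le (n+1)/2$: beyond that range the left-hand eigenvalue would be $0$ and the bound would carry no information. There is no real obstacle here.
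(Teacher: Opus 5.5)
Your proposal is correct and follows the paper's own derivation. You apply the preceding polar-Hermitian eigenvalue corollary with $j=i-1$, using that a Hermitian unitary $S$ is polar Hermitian and satisfies $|S|_{\sym}=I$, so that $\lambda^{\downarrow}_{2i-1}(|S|_{\sym})=1$ whenever $2i-1\le n$; your explicit checks of the hypothesis and of the index range spell out what the paper leaves implicit.
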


\vskip 5pt
\begin{cor}\label{cornorh}   Let $X_1,\dots, X_m\in\bM_n$  be such that their sum is polar Hermitian.  Then, for all symmetric norms,
$$
\left\| \left|\sum_{k=1}^m X_k\right|_{\sym}\right\| \le  \left\|\sum_{k=1}^m \left|X_k\right|_{\sym}\right\|.
$$
\end{cor}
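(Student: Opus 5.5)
We derive the statement from Theorem \ref{thsymh} in exactly the way Corollary \ref{cornor} was derived from Corollary \ref{corsym}. The only change is that the constant improves from $\sqrt{2}$ to $1$, because there is now a single conjugated term with coefficient $1/(4\beta)$ instead of two. Set
$$
S=\sum_{k=1}^m X_k, \qquad T=\sum_{k=1}^m |X_k|_{\sym}.
$$
Since $S$ is polar Hermitian, Theorem \ref{thsymh} gives a Hermitian unitary $V$ such that, for every $\beta>0$,
$$
|S|_{\sym}\le \beta T+\frac{1}{4\beta}VTV.
$$

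Both sides are positive semi-definite. I would then use the standard fact that $0\le P\le Q$ implies $\lambda_j^{\downarrow}(P)\le\lambda_j^{\downarrow}(Q)$ for every $j$; this is Weyl's monotonicity principle. Consequently $\||S|_{\sym}\|\le\|\beta T+\frac{1}{4\beta}VTV\|$ for every symmetric norm, since a symmetric norm of a positive matrix is a monotone symmetric gauge function of its eigenvalues.

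Next I apply the triangle inequality for the norm together with unitary invariance. Since $V$ is unitary ($V^*=V$), we have $\|VTV\|=\|VTV^*\|=\|T\|$, and therefore
$$
\left\| |S|_{\sym}\right\|\le \left(\beta+\frac{1}{4\beta}\right)\|T\|.
$$
The function $\beta\mapsto\beta+1/(4\beta)$ is minimized at $\beta=1/2$, where it equals $1$. This gives the claimed inequality.

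There is no real obstacle here. The only point to check is the monotonicity step, which passes from the operator inequality to the norm inequality, and it is standard for positive matrices. Alternatively, one can take $\beta=1/2$ from the start, so that $|S|_{\sym}\le \frac12(T+VTV)$, and apply the triangle inequality directly.
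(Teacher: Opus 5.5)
Your proof is correct and follows the paper's route: the paper's proof is just ``pick $\beta=1/2$ in Theorem \ref{thsymh}.'' The monotonicity, triangle-inequality and unitary-invariance steps you spell out are exactly what that one line leaves implicit, and optimizing over $\beta$ only recovers $\beta=1/2$, which is your own alternative.
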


\vskip 5pt
\begin{proof} Pick $\beta=1/2$ in Theorem  \ref{thsymh}.
\end{proof}

\section{A sharp  inequality for normal matrices}

The  proof of Theorem \ref{thsym} is inspired by that one of \cite[Theorem 2.2]{BLposlin}. We state a part of this result in the next theorem.

\vskip 5pt
\begin{theorem}\label{thp}     Let $\Phi: \bM_n\to \bM_m$, $n,m\neq 1$ be a  positive linear map and let $N\in\bM_n$ be normal. Fix $\beta> 0$. Then, there exists a unitary $V\in\bM_m$ such that
$$
   |\Phi(N)|  \le \beta\Phi(|N|) +\frac{1}{4\beta}V\Phi(|N|)V^*.
$$
If $\beta\ge 1/2$, then the constant $1/4$ is the smallest possible one and this inequality is sharp even if we confine $N$ to the class of Hermitian symmetries in $\bM_n$.
\end{theorem}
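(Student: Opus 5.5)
We follow the block-matrix strategy used for Theorem \ref{thsym}. Since $N$ is normal, $|N|=|N^*|$, and the matrix
$$
\begin{bmatrix} |N| & N^* \\ N & |N| \end{bmatrix}
$$
is positive semi-definite. This follows from the polar decomposition $N=U|N|=|N|^{1/2}U|N|^{1/2}$, in which $U$ can be chosen unitary and commuting with $|N|$, or alternatively from the spectral theorem applied entrywise to $2\times 2$ blocks $\begin{bmatrix}|z|&\bar z\\ z&|z|\end{bmatrix}$.

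A positive linear map need not be $2$-positive, so we cannot simply apply $\Phi\otimes \mathrm{id}_2$. Instead we use normality: $N$ generates a commutative $C^*$-algebra, and a positive map restricted to a commutative $C^*$-algebra is completely positive (Stinespring/Arveson). Because the block matrix above lies in $\bM_2$ over the commutative algebra $C^*(N,I)$, we get
$$
\begin{bmatrix} \Phi(|N|) & \Phi(N)^* \\ \Phi(N) & \Phi(|N|) \end{bmatrix}\ge 0.
$$
Now take the polar decomposition $\Phi(N)=W|\Phi(N)|$ with $W\in\bM_m$ unitary, and conjugate by $I\oplus W$:
$$
\begin{bmatrix} \Phi(|N|) & |\Phi(N)| \\ |\Phi(N)| & W^*\Phi(|N|)W \end{bmatrix}\ge 0.
$$
Compressing by $\begin{bmatrix}\sqrt\beta I & -(\sqrt\beta)^{-1}I\end{bmatrix}$, exactly as in the proof of \eqref{eqpr}, gives
$$
2|\Phi(N)|\le \beta\Phi(|N|)+\beta^{-1}W^*\Phi(|N|)W .
$$
This carries a factor $1/2$ rather than $1/(4\beta)$ and so does not yet give the claimed bound. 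To obtain the stated constant, we first rescale the off-diagonal blocks by $2$: multiply on both sides by $I\oplus \tfrac12 I$, then compress with $\sqrt{\beta}$, as in the passage from $\bA$ to $\bB$. This step is legitimate only if we start from the stronger positivity of $\begin{bmatrix}\Phi(|N|)&2|\Phi(N)|\\ 2|\Phi(N)|&\ast\end{bmatrix}$, which we do not have. We therefore expect to reconcile the constant by citing the proof of \cite[Theorem 2.2]{BLposlin}, which uses this very block argument, and obtain $V=W^*$ in the conclusion.

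The sharpness claim, that $1/4$ is optimal for $\beta\ge 1/2$ even for Hermitian symmetries $N$, is the main obstacle. We would construct $n=m=2$ examples: let $N=\mathrm{diag}(1,-1)$, so that $|N|=I$, and take $\Phi$ a compression $\Phi(X)=K^*XK$ (a positive map into $\bM_2$), chosen so that $\Phi(|N|)=K^*K$ is a rank-one-dominated or nearly singular positive matrix while $|\Phi(N)|$ has comparable norm. With $\Phi(|N|)$ close to a projection $P$ and $|\Phi(N)|$ close to a projection onto the orthogonal direction, any unitary conjugate $VPV^*$ must cover that direction. Testing the inequality with coefficient $c<1/4$ against the vectors in both directions should then produce a contradiction, via a trace or determinant comparison after letting the parameter of $K$ tend to its limit. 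Extending the construction to general $n,m\ge 2$ is done by padding with direct sums.
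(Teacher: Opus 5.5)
The paper gives no proof of this statement; it quotes it from \cite[Theorem 2.2]{BLposlin}, whose block-matrix argument it says inspired the proof of Theorem~\ref{thsym}. Your proof of the inequality follows that route and is correct. Commutativity of $C^*(N,I)$ does give $2$-positivity; more elementarily, if $N=\sum_i\lambda_iP_i$ spectrally, your block matrix equals $\sum_i\begin{bmatrix}|\lambda_i|&\bar\lambda_i\\ \lambda_i&|\lambda_i|\end{bmatrix}\otimes\Phi(P_i)\ge0$.

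However, you misread the outcome. Your compression gives $2|\Phi(N)|\le\beta\Phi(|N|)+\beta^{-1}W^*\Phi(|N|)W$ for \emph{every} $\beta>0$. Applying it with $2\beta$ in place of $\beta$ gives exactly $|\Phi(N)|\le\beta\Phi(|N|)+\frac{1}{4\beta}W^*\Phi(|N|)W$, with $V=W^*$ independent of $\beta$. Your objection to the congruence by $I\oplus\frac12 I$ is also unfounded. A congruence always preserves positivity: it sends $\begin{bmatrix}A&B\\ B&C\end{bmatrix}$ to $\begin{bmatrix}A&B/2\\ B/2&C/4\end{bmatrix}$, which is just the same reparametrisation of $\beta$. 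No appeal to \cite{BLposlin} is needed for this half.

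The genuine gap is the sharpness claim. You give only a plan, and the plan cannot work as described.

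First, the positivity you proved forces $\Phi(N)=\Phi(|N|)^{1/2}C\,\Phi(|N|)^{1/2}$ with $\|C\|\le1$. Hence $\||\Phi(N)|e\|^2=\|\Phi(N)e\|^2\le\|\Phi(|N|)\|\,\langle\Phi(|N|)e,e\rangle$, so $|\Phi(N)|$ cannot be close to the projection onto a direction where $\Phi(|N|)$ is nearly $0$.

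Second, trace and determinant comparisons are too weak. In $\bM_2$ the same positivity gives $\mathrm{Tr}\,|\Phi(N)|\le\mathrm{Tr}\,\Phi(|N|)$ and $\det|\Phi(N)|\le\det\Phi(|N|)$. For every unitary $V$, the right-hand side with $c$ in place of $1/4$ has trace $(\beta+c/\beta)\,\mathrm{Tr}\,\Phi(|N|)$ and, by Minkowski's determinant inequality, determinant at least $(\beta+c/\beta)^2\det\Phi(|N|)$. Since $\beta+c/\beta\ge1$ whenever $\beta\ge1$, no contradiction can arise there.

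Note also that the hypothesis $\beta\ge1/2$ never enters your sketch. What is missing is an eigenvalue (Weyl) obstruction at a specific $\beta$-dependent parameter, not a limit. For instance, put $t=1/(4\beta^2)$, which is $\le1$ precisely because $\beta\ge1/2$. Let $D=\mathrm{diag}(1,t)$ and $J=\begin{bmatrix}0&1\\ 1&0\end{bmatrix}$, a Hermitian symmetry unitarily equivalent to your $\mathrm{diag}(1,-1)$. Take $N=J$ and $\Phi(Y)=D^{1/2}YD^{1/2}$. Then $\Phi(|N|)=D$ and $|\Phi(N)|=\sqrt t\,I=\frac{1}{2\beta}I$. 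If $|\Phi(N)|\le\beta D+\frac{c}{\beta}VDV^*$ for some unitary $V$, Weyl's inequality yields
\[
\frac{1}{2\beta}\le\lambda_2^{\downarrow}\left(\beta D+\frac{c}{\beta}VDV^*\right)\le\lambda_2^{\downarrow}(\beta D)+\lambda_1^{\downarrow}\left(\frac{c}{\beta}VDV^*\right)=\frac{1+4c}{4\beta},
\]
that is, $c\ge1/4$. For larger $n,m$, use $N=J\oplus I_{n-2}$ and $\Phi(Y)=D^{1/2}Y_0D^{1/2}\oplus 0_{m-2}$, where $Y_0$ is the leading $2\times 2$ block of $Y$. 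The min-max principle then preserves the lower bound $\frac{1}{2\beta}$ on $\lambda_2^{\downarrow}$ of the right-hand side.
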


This entails several elegant sharp inequalities. For instance, for two normal matrices $A,B\in\bM_n$, their Schur product satisfies
\begin{equation}\label{eqSchur}
|A\circ B| \le |A|\circ|B| +\frac{1}{4} V(|A|\circ|B|)V^*
\end{equation}
for some unitary $V\in\bM_n$. One easily gives an example in $\bM_2$ showing that the constant $1/4$ cannot be diminished, see \cite[Corollary 3.5]{BLposlin}.

Strangely enough we did not explicitly mention the following version for the sum of two normal matrices.

\vskip 5pt
\begin{cor}\label{cornorm} Let $A,B\in\bM_n$ be normal. Then, for some unitary $V\in\bM_n$,
$$
|A+ B| \le |A|+|B| +\frac{1}{4} V(|A|+|B|)V^*.
$$
If $n\ge 3$, the constant $1/4$ is the smallest possible one.
\end{cor}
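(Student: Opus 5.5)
The inequality itself should follow from Theorem \ref{thp} with a well-chosen positive linear map and $\beta=1$. Take $\Phi:\bM_{2n}\to\bM_n$ defined by
$$
\Phi\left(\begin{bmatrix} P & Q\\ R& T\end{bmatrix}\right)=P+T,
$$
which is positive, being the sum of the two diagonal-block compressions. Apply it to the normal matrix $N=A\oplus B\in\bM_{2n}$. Then $|N|=|A|\oplus|B|$, so $\Phi(N)=A+B$ and $\Phi(|N|)=|A|+|B|$. Theorem \ref{thp} with $\beta=1$ gives a unitary $V\in\bM_n$ such that
$$
|A+B|\le |A|+|B|+\tfrac14 V(|A|+|B|)V^*.
$$
This part is routine.

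For the sharpness when $n\ge 3$, I would build a concrete example showing that, for any $c<1/4$, no unitary $V$ gives $|A+B|\le |A|+|B|+cV(|A|+|B|)V^*$. The natural reduction goes back to the extremal situation in Theorem \ref{thp}, where $N$ is a Hermitian symmetry and $\Phi$ is a compression. Here I would take $A$ and $B$ to be rank-one or small Hermitian matrices, for instance $A=uu^*-vv^*$ type symmetries restricted to subspaces, arranged so that $A+B$ has a large eigenvalue in a direction where $|A|+|B|$ has small weight. Then $|A|+|B|$ should have one large eigenvalue and the others small, so that the unitary conjugate term cannot cover the missing direction unless its coefficient is at least $1/4$.

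A plausible test is as follows. Pick unit vectors $e$ and $f$ at an angle, and set $A=ee^*-ff^*$, a Hermitian rank-two matrix, together with $B$ a small positive perturbation along $e+f$. Compare the quadratic forms on the eigenvector $x$ of $|A+B|$ for its top eigenvalue. Since $V(|A|+|B|)V^*$ has the same spectrum as $|A|+|B|$, its value at $x$ is at most $\lambda_1^\downarrow(|A|+|B|)$. Meanwhile $\langle (|A|+|B|)x,x\rangle$ is small. These two facts force $c\,\lambda_1^\downarrow(|A|+|B|)\ge \lambda_1^\downarrow(|A+B|)-\langle(|A|+|B|)x,x\rangle$. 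Optimising the angle and the size of $B$ should push the ratio toward $1/4$. A single quadratic form may not suffice, since the inequality must fail for every $V$. I would then test with a $2$-dimensional compression, using that the eigenvalues of $V(|A|+|B|)V^*$ are fixed, and argue with Weyl's inequalities on $\lambda_2^\downarrow$ as well.

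The main obstacle is this sharpness example. One has to find why three dimensions are needed (presumably $n=2$ allows a better constant), and choose $A,B$ so that every unitary orbit element fails. I would first try to transplant the extremal example behind Theorem \ref{thp} (a Hermitian symmetry in $\bM_3$ compressed to $\bM_2$), re-encoded as a sum $A+B$ of two normal matrices in $\bM_3$. I would then verify failure for $c<1/4$ by computing eigenvalues exactly, taking limits in a parameter $\varepsilon\to0$.
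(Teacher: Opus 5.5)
Your derivation of the inequality is the paper's own: Theorem \ref{thp} with $\beta=1$, $N=A\oplus B$ and $\Phi$ the sum of the diagonal blocks.

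The sharpness part, however, is not proved. You describe a search but exhibit no pair $A,B$ and carry out no computation, and the one concrete test you suggest cannot work. If $A=ee^*-ff^*$ and $B$ is a small perturbation, then $|A+B|-|A|-|B|\to 0$ as $B\to 0$ by continuity, while $\lambda_1^{\downarrow}(|A|+|B|)\to\lambda_1^{\downarrow}(|A|)>0$. So the ratio you hope to push to $1/4$ tends to $0$.

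Your worry that a single quadratic form cannot handle every $V$ is also misplaced. The bound $V(|A|+|B|)V^*\le \lambda_1^{\downarrow}(|A|+|B|)\,I$ is uniform in $V$, so no compression or Weyl argument on $\lambda_2^{\downarrow}$ is needed. What is required is simply a normal pair with
\[
\lambda_1^{\downarrow}\bigl(|A+B|-|A|-|B|\bigr)\ge \tfrac14\,\lambda_1^{\downarrow}\bigl(|A|+|B|\bigr),
\]
tested at a top eigenvector of $|A+B|-|A|-|B|$ (not of $|A+B|$).

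The paper supplies such a pair by quoting from \cite{BLdiag} two Hermitian contractions in $\bM_3$ for which $|A+B|\le\frac12 I+|A|+|B|$ holds with $1/2$ optimal. For contractions $cV(|A|+|B|)V^*\le 2cI$, so a constant $c<1/4$ would yield $|A+B|\le |A|+|B|+2cI$ with $2c<1/2$, a contradiction.

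Your own idea can also be completed by splitting $ee^*-ff^*$ differently. In $\bM_2$ take $A=ee^*$ and $B=-ff^*$ with unit vectors satisfying $\langle e,f\rangle=3/5$. Then $(A+B)^2=\frac{16}{25}I$, so $|A+B|=\frac45 I$, while $|A|+|B|=ee^*+ff^*$ has eigenvalues $\frac85$ and $\frac25$. At the unit eigenvector $x$ for $\frac25$, the inequality with constant $c$ forces $\frac45\le\frac25+\frac85c$, i.e. $c\ge\frac14$, whatever $V$ is. Padding with zeros handles every $n\ge 2$. So, contrary to your guess, three dimensions are not needed; this also answers the question about $n=2$ that the paper leaves open.
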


\vskip 5pt
This inequality follows from Theorem \ref{thp} with $\beta=1$, $N=A\oplus B$, and the map $\Phi:\bM_{2n}\to\bM_n$,
$$
\begin{bmatrix}X & Y \\ Z& R\end{bmatrix} \mapsto X+R.
$$
However, contrarily to the Schur version  \eqref{eqSchur}, it seems necessary to request $n\ge 3$ in Corollary \ref{cornorm}
to get sharpness of $1/4$ (what is the best constant for $n=2$ ?). At the end of the paper \cite{BLdiag}, an example of two Hermitian contractions $A,B\in\bM_3$ is given such that
$$
|A+B| \le \frac{1}{2}I + |A|+|B|,
$$
where the constant $1/2$ cannot be diminished. Since for two contractions we have for any unitary $V$,
$$
\frac{1}{4}V(|A|+|B|)V^* \le \frac{1}{2}I
$$
we infer that the constant $1/4$ is sharp in Corollary \ref{cornorm} when $n\ge 3$.

\section{Around Zhang's triangle inequality}

We propose an approach of Theorem \ref{thZ} slighly different to the original proof by Zhang. As in Zhang's proof, the main tool is
 Thompson's inequality \cite{T}: {\it if $A,B\in\bM_n$, then there exists a pair of unitaries $U,V\in\bM_n$ such that}
\begin{equation}\label{eqthomp1}
|A+B| \le U|A|U^* +V|B|V^*.
\end{equation}
This is equivalent to the existence of a pair of contractions $K,L\in\bM_n$ such that
\begin{equation}\label{eqthomp2}
|A+B| \le K|A|K^* +L|B|L^*.
\end{equation}
Clearly $\eqref{eqthomp1} \Rightarrow  \eqref{eqthomp2}$. Conversely,  $K|A|K^*=XX^*$ with $X=K|A|^{1/2}$. So $K|A|K^*=U|A|^{1/2}K^*K|A|^{1/2}U^*$ for some unitary $U$. Since $K^*X\le I$, we have $K|A|K^*\le U|A|U^*$. Similarly, $L|B|L^*\le V|B|V^*$ for some unitary $V$. Hence  $\eqref{eqthomp2} \Rightarrow  \eqref{eqthomp1}$.

The equivalence between \eqref{eqthomp1} and \eqref{eqthomp2} is quite useful. Here is an example. Let
$X,Y\in\bM_n^+$, the positive semidefinite part of $\bM_n$. Let
$$
A=\begin{bmatrix} X&0 \\ 0&0\end{bmatrix}, \qquad B=\begin{bmatrix} 0&0 \\ Y&0\end{bmatrix}
$$
Then \eqref{eqthomp1} yields two unitary matrices $U_0,V_0\in\bM_{2n}$ such that
$$
\begin{bmatrix} \sqrt{X^2+Y^2}&0 \\ 0&0\end{bmatrix} \le  U_0\begin{bmatrix} X&0 \\ 0&0\end{bmatrix}U_0^*
+ V_0\begin{bmatrix} Y&0 \\ 0&0\end{bmatrix}V_0^*.
$$
Hence, for two contractions $K,L\in\bM_n$, we have
$ \sqrt{X^2+Y^2} \le K XK^* +LYL^*$, and so 
$$
\sqrt{X^2+Y^2} \le U XU^* +VYV^*$$
for some unitaries $U,V\in\bM_n$. This observation was one of the motivation which leaded the authors of \cite{AB} to their subbaditivity inequality : {\it If $f:[0,\infty)\to [0,\infty)$ is concave and $S,T\le \bM_n^+$, then
\begin{equation}\label{eqAB}
f(S+T)\le Uf(S)U^* +Vf(T)V^*
\end{equation}
for some unitaries $U,V\in\bM_n$.}

Now, starting with arbitrary matrices $A_1,A_2,B_1,B_2\in\bM_n$ and
$$
A=\begin{bmatrix} A_1&0 \\ A_2&0\end{bmatrix}, \qquad B=\begin{bmatrix} B_1&0 \\ B_2&0\end{bmatrix}
$$
we obtain as in the previous paragraph
$$
\sqrt{|A_1 + B_1|^2 +|A_2+B_2|^2}\le U\sqrt{|A_1|^2 + |A_2|^2}U^* + V\sqrt{|B_1|^2 + |B_2|^2}V^* 
$$
for some unitaries $U,V\in\bM_n$. Following Zhang, we pick $A_2=A_1^*$ and $B_2=B_1^*$ to get
$$
\sqrt{|A_1 + B_1|^2 +|(A_1+B_1)^*|^2}\le U\sqrt{|A_1|^2 + |A_1^*|^2}U^* + V\sqrt{|B_1|^2 + |B_1^*|^2}V^*.
$$
Dividing by $\sqrt{2}$,
$$
|A_1+B_1|_{\qsym} \le U |A_1|_{\qsym} U^* + V|B_1|_{\qsym}V^*.
$$
This is Zhang's Theorem \ref{thZ}.

One may combine \eqref{eqAB} with Zhang's theorem to get some exotic inequalities. For instance :

\vskip 5pt\noindent
\begin{cor} Let $A,B\in\bM_n$. The there exist some unitaries $U,V\in\bM_n$ such that
$$
U e^{-|A|_{\qsym}} U^* + Ve^{-|A|_{\qsym}} V^*  \le I +e^{-|A+B|_{\qsym}} 
$$
\end{cor}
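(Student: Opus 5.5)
The plan is to combine Zhang's Theorem \ref{thZ} with the subadditivity inequality \eqref{eqAB}, applied to the concave function $f(t)=1-e^{-t}$. I read the second term on the left as $Ve^{-|B|_{\qsym}}V^*$. The statement as printed has $A$ twice, which I take to be a typo, since $B$ otherwise plays no role on the left. The function $f$ maps $[0,\infty)$ into $[0,1)\subset[0,\infty)$, is concave, and is increasing. Writing $S=|A|_{\qsym}$ and $T=|B|_{\qsym}$, the target inequality is equivalent to
$$
I-e^{-|A+B|_{\qsym}} \le U(I-e^{-S})U^* + V(I-e^{-T})V^*,
$$
that is, to $f(|A+B|_{\qsym})\le Uf(S)U^*+Vf(T)V^*$. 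This follows by rearranging, using $UU^*=VV^*=I$.

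First, Theorem \ref{thZ} gives unitaries $U_0,V_0$ with $|A+B|_{\qsym}\le U_0SU_0^*+V_0TV_0^*=:R$. Second, I pass this operator inequality through $f$. The map $f$ is monotone but not operator monotone, so I only get the weaker conclusion up to a unitary. By Weyl's monotonicity principle, $\lambda_j^{\downarrow}(|A+B|_{\qsym})\le\lambda_j^{\downarrow}(R)$ for all $j$. Since $f$ is increasing, $\lambda_j^{\downarrow}(f(|A+B|_{\qsym}))\le\lambda_j^{\downarrow}(f(R))$. Diagonalising both sides in eigenbases ordered nonincreasingly then gives a unitary $W$ with $f(|A+B|_{\qsym})\le Wf(R)W^*$.

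Third, I apply \eqref{eqAB} to the positive matrices $U_0SU_0^*$ and $V_0TV_0^*$. This gives unitaries $U_1,V_1$ with
$$
f(R)\le U_1 f(U_0SU_0^*)U_1^*+V_1f(V_0TV_0^*)V_1^*=U_1U_0f(S)U_0^*U_1^*+V_1V_0f(T)V_0^*V_1^*,
$$
using $f(ZXZ^*)=Zf(X)Z^*$ for unitary $Z$. Conjugating by $W$ and setting $U=WU_1U_0$ and $V=WV_1V_0$ yields $f(|A+B|_{\qsym})\le Uf(S)U^*+Vf(T)V^*$, which is the claim after rearranging.

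The only delicate step is the second one: an operator inequality $X\le R$ does not give $f(X)\le f(R)$ for a merely monotone $f$. The fix is that an inequality up to unitary conjugation suffices here, and eigenvalue domination supplies exactly that. The rest is bookkeeping of the unitaries.
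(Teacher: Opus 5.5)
Your proposal is correct and follows essentially the same route as the paper: Zhang's Theorem~\ref{thZ}, then the ``monotone up to a unitary'' step for $f(t)=1-e^{-t}$ (which you justify via Weyl's monotonicity principle, whereas the paper only asserts it), then \eqref{eqAB}, and finally the rearrangement using $f(Z)=I-e^{-Z}$. Your reading of the second left-hand term as $Ve^{-|B|_{\qsym}}V^*$ is also the intended one.
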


\vskip 5pt\noindent
\begin{proof} Since $f(t)=1-e^{-t}$ is  increasing on $|0,\infty)$, for any pair $X,Y\in\bM_n^+$ such that $X\le Y$, we have    $X\le Wf(Y)W^*$ for some unitary $W\in\bM_n$. Hence Theorem \ref{thZ} shows that
$$
f(|A+B|_{\qsym}) \le Wf\left(U_0|A|_{\qsym}U_0^*+V_0|B|_{\qsym}V_0^*\right)W^*
$$
Applying \eqref{eqAB} to the nonnegative concave function $f(t)$, we 
get 
$$
f(|A+B|_{\qsym}) \le Uf(|A|_{\qsym})U^*+Vf(|B|_{\qsym})V^*
$$
for some unitaries $U,V$. Since  $f(Z)=I-e^{-Z}$ for every $Z\in\bM_n^+$, we get the result.
\end{proof}

\section{Concluding remarks}

This note deals with operator triangle inequalities. It is the continuation of \cite{BLsym} and a short complement to the important contribution \cite{TZsym}, running from  triangle inequalities to operator Euler identities and related Clarkson-McCarthy type inequalities. 

Though we confine to the matrix setting, we could consider versions for operators on Hilbert spaces,   or for measurable operators affiliated to a von Neumann algebra, see \cite{BS} for versions of Theorem \ref{thp} in these settings.

It remarkable that  Thompson's inequality  fo the usual operator modulus has been only very recently generalized by Zhang to the quadratic symmetric modulus. These results and our contributions for the standard symmetric modulus will  be completed with other results : there is no doubt that the noncommutative world offers a lot of perspectives on various  triangle inequalities

There are other, still very recent, known type of operator triangle inequalities.  Tang  and  Zhang \cite{LZ} prove the sharp inequality for the Hilbert-Schmidt (Frobenius) norm  : {\it If $A_1,\ldots A_m\in\bM_n$, then}
\begin{equation}\label{eqc2}
\left\| \sum_{k=1}^m A_k \right\|_2 \le  \sqrt{\frac{1 +\sqrt{m}}{2}} \left\|  \sum_{k=1}^m |A_k| \right\|_2.
\end{equation}
The case $m=2$ is the  conjecture from \cite{eyL}; it was proved by Lin and Zhang in \cite{LZ}, and another proof can be found in \cite{TZconj}. What are the symmetric moduli versions of \eqref{eqc2} ? What about the other Schatten $p$-norms ?

A geometric mean version of Theorem \ref{thp}  exists (see \cite{BLposlin}) and we  also have similar refinements of Theorems \ref{thsym}-\ref{thsymh}. For instance, for Theorem \ref{thsymh}  :

\vskip 5pt
\begin{theorem}\label{thsymhg} Let $X_1,\dots, X_m\in\bM_n$ be such that their sum is polar Hermitian.  Then, there exists a Hermitian  unitary $W\in\bM_n$ such that
$$
\left|\sum_{k=1}^m X_k\right|_{\sym} \le \left\{\sum_{k=1}^m \left|X_k\right|_{\sym}\right\} \#\left\{ W\left(\sum_{k=1}^m \left|X_k\right|_{\sym}\right) W\right\}.
$$
\end{theorem}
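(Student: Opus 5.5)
We reuse the block matrix $\bA$ constructed in the proof of Theorem \ref{thsym}, written for $m$ matrices, and then apply the classical maximal characterization of the geometric mean. For positive definite $P,Q\in\bM_n$ and Hermitian $H$,
$$
\begin{bmatrix} P & H \\ H & Q\end{bmatrix}\ge 0 \quad\Longrightarrow\quad H\le P\# Q,
$$
since $P\#Q$ is the largest Hermitian $H$ making this block matrix positive. The semidefinite case follows by replacing $P,Q$ with $P+\varepsilon I, Q+\varepsilon I$ and letting $\varepsilon\to 0$; the geometric mean is monotone and continuous from above.

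Set $S=\sum_k X_k$ and $P=\sum_k |X_k|_{\sym}$. Since $S$ is polar Hermitian, $S=e^{i\theta}W|S|$ with $W$ a Hermitian unitary. The phase $e^{i\theta}$ cancels in the conjugations \eqref{eqbl1}, \eqref{eqbl2}. Indeed, conjugating by $I\oplus e^{i\theta}W$ and by $I\oplus e^{-i\theta}W$ (which is $V^*$ up to a phase, with $V=e^{i\theta}W$) puts $|S|$ and $|S^*|$ off the diagonal, exactly as in Remark \ref{remproof}. The $(2,2)$ entries become $W(\sum_k|X^*_k|)W$ and $W(\sum_k|X_k|)W$. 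Adding and halving gives the positive block matrix
$$
\bA=\begin{bmatrix} P & |S|_{\sym} \\ |S|_{\sym} & \tfrac12 W\left(\sum_k |X_k|+|X_k^*|\right)W\end{bmatrix}=\begin{bmatrix} P & |S|_{\sym} \\ |S|_{\sym} & WPW\end{bmatrix}.
$$
The identification of the $(2,2)$ entry uses $W^*=W$, so that $V\cdot V$ and $V^*\cdot V^*$ both become $W\cdot W$. This is exactly the point where the polar Hermitian hypothesis enters.

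The maximal characterization then yields $|S|_{\sym}\le P\#(WPW)$, which is the claim. The only delicate step is the lack of invertibility. We would handle it by applying the characterization to $\bA+\varepsilon(I\oplus I)$, whose diagonal blocks are $P+\varepsilon I$ and $W(P+\varepsilon I)W$, and then letting $\varepsilon\downarrow 0$. As a consistency check, the arithmetic–geometric mean inequality $P\#Q\le \beta P+\frac{1}{4\beta}Q$ (from the weighted form $P\#Q\le \frac{t}{2}P+\frac{1}{2t}Q$) shows that this refines Theorem \ref{thsymh}.
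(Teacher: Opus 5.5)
Your proof is correct and is essentially the paper's own argument: writing $V=e^{i\theta}W$ in the block matrix $\bA$ from the proof of Theorem \ref{thsym} turns the $(2,2)$ block into $WPW$, and the maximal property of $\#$ then gives $|S|_{\sym}\le P\#(WPW)$. Your $\varepsilon$-regularisation (using $W(P+\varepsilon I)W=WPW+\varepsilon I$) just spells out the singular case, which the paper leaves implicit. The only slip is cosmetic: $e^{-i\theta}W$ is exactly $V^*$, not ``$V^*$ up to a phase''.
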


We did not focus on these inequalities involving the geometric mean $\#$ as we are mostly interested in triangle inequalities of the form of  Corollary \ref{cornorm} or \eqref{eqc2} with sharp constants. However Theorem \ref{thsymhg} yields an improvement of Corollary \ref{cornorh} as a weak log-majorisation :

\vskip 5pt
\begin{cor}\label{corlog} Let $X_1,\dots, X_m\in\bM_n$ be such that their sum is polar Hermitian.  Then,
$$
\left|\sum_{k=1}^m X_k\right|_{\sym} \prec_{\mathrm{wlog} }\sum_{k=1}^m \left|X_k\right|_{\sym}.
$$
\end{cor}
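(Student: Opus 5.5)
Set $S=\sum_{k=1}^m X_k$ and $T=\sum_{k=1}^m |X_k|_{\sym}$. Theorem \ref{thsymhg} gives a Hermitian unitary $W$ with
$$
|S|_{\sym}\le T\# (WTW).
$$
The plan is to feed this operator inequality into the standard log-majorisation for the geometric mean. The intermediate goal is
$$
\prod_{j=1}^k \lambda_j^{\downarrow}(|S|_{\sym}) \le \prod_{j=1}^k \lambda_j^{\downarrow}\bigl(T\#(WTW)\bigr) \le \prod_{j=1}^k \lambda_j^{\downarrow}(T)
$$
for every $k=1,\dots,n$. This is exactly the weak log-majorisation claimed in Corollary \ref{corlog}.

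The first inequality is immediate. From $0\le |S|_{\sym}\le T\#(WTW)$, Weyl's monotonicity principle gives
$$
\lambda_j^{\downarrow}(|S|_{\sym})\le \lambda_j^{\downarrow}(T\#(WTW))
$$
for each $j$, and products of nonnegative numbers respect these termwise bounds.

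The second inequality is the main step. I would use the log-majorisation of Ando--Hiai type for the geometric mean of positive matrices $P,Q$:
$$
P\# Q \prec_{\mathrm{wlog}} P^{1/2}Q^{1/2},
$$
where the right-hand side is understood through its singular values, i.e.\ $\prod_{j\le k}\lambda_j^{\downarrow}(P\#Q)\le\prod_{j\le k}s_j(P^{1/2}Q^{1/2})$. Its proof goes as follows.
\begin{itemize}
\item[(i)] The top eigenvalue of $P\# Q$ is bounded via the operator norm: $\|P\#Q\|_\infty\le\|P^{1/2}Q^{1/2}\|_\infty$. Indeed, if $P^{1/2}Q^{1/2}$ is a contraction, then $Q\le P^{-1}$ (after a standard perturbation to invertibility), and the monotonicity of $\#$ gives $P\#Q\le P\#P^{-1}=I$.
\item[(ii)] Antisymmetric tensor powers $\wedge^k$ pass this bound to products, since $\wedge^k(P\#Q)=(\wedge^kP)\#(\wedge^kQ)$.
\end{itemize}
Taking $P=T$ and $Q=WTW=WTW^*$, we have $Q^{1/2}=WT^{1/2}W^*$. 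Horn's inequality then gives
$$
\prod_{j\le k}s_j(T^{1/2}WT^{1/2}W^*)\le \prod_{j\le k}s_j(T^{1/2})\,s_j(T^{1/2})=\prod_{j\le k}\lambda_j^{\downarrow}(T),
$$
because $W$ is unitary. This establishes the second inequality.

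I expect the main obstacle to be stating the geometric mean log-majorisation cleanly, including the reduction to invertible $T$. For the latter I would replace $T$ by $T+\varepsilon I$ and let $\varepsilon\to0$, using the continuity of $\#$ along decreasing sequences. Alternatively, the scalar step (i) can be done directly: the block matrix $\begin{bmatrix}P&P\#Q\\P\#Q&Q\end{bmatrix}$ is positive, which yields $\|(P\#Q)\|\le\|P^{1/2}Q^{1/2}\|$ after congruence by $P^{-1/2}\oplus Q^{-1/2}$ and the fact that a contraction off-diagonal block is forced.
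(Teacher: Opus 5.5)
Your proof is correct and follows the paper's own route. The paper says Corollary~\ref{corlog} is immediate from Theorem~\ref{thsymhg}: Weyl monotonicity, the standard log-majorisation for the geometric mean $T\#(WTW)$, and the unitary similarity of $T$ and $WTW$, which is what you spell out via antisymmetric tensors and Horn's inequality. The paper also notes that positivity of $\bA'$ gives the result directly, and this is the cleanest form of your closing remark: writing the off-diagonal block as $T^{1/2}K(WTW)^{1/2}$ with $K$ a contraction, Horn's inequality finishes at once, with no need for the bound by $\|P^{1/2}Q^{1/2}\|$ (deriving that bound from $P^{1/2}KQ^{1/2}$ would also need the unstated step that $P\#Q\ge 0$ has norm equal to the spectral radius of $KQ^{1/2}P^{1/2}$).
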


For a background on the geometric mean, see \cite{Ando}, \cite[Capter 4]{Bh}. For  log-majorisations relations such as $ \prec_{\mathrm{wlog} }$, see \cite{BLtv}, where a series of log-majorisation is given, extending the triangle type relation
$$
| X+Y|  \prec_{\mathrm{wlog} } |X| + |Y|
$$
for two normal matrices $X,Y\in\bM_n$.
 Corollary \ref{corlog} is immediate from Theorem \ref{thsymhg}; note that the positivity  of $\bA'$ also directly entails Corollary \ref{corlog}. 

To prove Theorem \ref{thsymhg} it suffices to consider the case of two matrices $X,Y$ whose sum is polar Hermitian. Then going back to \eqref{eqbl} and using $V=e^{i\theta}W$, $V^*=e^{-i\theta}W$, for some Hermitian unitary $W$, we have a positive block matrix
\begin{equation*}\label{eqbl}
\bA'=\begin{bmatrix} |X|_{\sym} +|Y|_{\sym} &|X+Y|_{\sym} \\
|X+Y|_{\sym}& W(|X|_{\sym} +|Y|_{\sym})W
\end{bmatrix}.
\end{equation*}
Using the maximal property of $\#$ then establishes Theorem  \ref{thsymhg}.

A special case of Corollary \ref{coreigh} is :  {\it Let $X_1,\dots, X_m\in\bM_7$ be such that their sum is a Hermitian unitary. Then,
$$
\lambda^{\downarrow}_{4}\left(\sum_{k=1}^m \left|X_k\right|_{\sym}\right) \ge 1.
$$
}
Hence decomposing a unitary Hermitian  as a sum, yields some strange constraints on the summands. This suggests the following question :

\begin{question} Let $X$ belonging either to $\bM_{2n}$ or $\bM_{2n-1}$. Suppose that $X$ is expansive, $|X|\ge I$, and consider a decomposition $X=X_1+\cdots +X_m$. What can we say about 
$$
\lambda^{\downarrow}_{n}\left(\sum_{k=1}^m \left|X_k\right|_{\sym}\right), \qquad  \lambda^{\downarrow}_{n}\left(\sum_{k=1}^m \left|X_k\right|\right)\quad  ?
$$ 
\end{question}

\vskip 15pt
\noindent
Jean-Christophe Bourin

\noindent
Université Marie et Louis Pasteur, CNRS, LmB (UMR 6623), F-25000 Besançon, France.

\noindent

\noindent

\noindent
Email: jcbourin@univ-fcomte.fr

  \vskip 15pt\noindent
Eun-Young Lee

\noindent
 Department of mathematics, KNU-Center for Nonlinear Dynamics,

\noindent
Kyungpook National University,

\noindent
 Daegu 702-701, Korea.

\noindent
  Email: eylee89@knu.ac.kr

\end{document}